\documentclass[10pt,twoside]{article}

\usepackage{amssymb,amsmath}
\usepackage[T1]{fontenc}
\usepackage{xfrac,faktor}

\newcommand{\al}{\alpha}

\newcommand{\be}{\beta}

\newcommand{\ga}{\gamma}

\newcommand{\epsn}{\varepsilon_0}

\newcommand{\la}{\lambda}

\newcommand{\om}{\omega}

\newcommand{\tht}{\vartheta}

\renewcommand{\phi}{\varphi}

\newcommand{\N}{{\mathbb N}}

\newcommand{\Lim}{\mathrm{Lim}}

\newcommand{\thtn}{\tht_n}
\newcommand{\thtnod}{\tht_0}

\newcommand{\T}{{\operatorname{T}}}

\newcommand{\mc}{{\operatorname{mc}}}

\newcommand{\PA}{{\operatorname{PA}}}

\newcommand{\pioneonecanod}{\Pi^1_1{\operatorname{-CA}_0}}

\newcommand{\qed}{\mbox{ }\hfill $\Box$\vspace{2ex}}

\newlength{\hilflh}

\newtheorem{theo}{Theorem}[section]
\newtheorem{cor}[theo]{Corollary}
\newtheorem{lem}[theo]{Lemma}
\newtheorem{defi}[theo]{Definition}

\newcommand{\Romannumeral}[1]{\uppercase\expandafter{\romannumeral #1\relax}}

\newcommand{\tauCkquok}{{\sfrac{{\tau}}{{_\Ck}}}}
\newcommand{\tauCkquoke}{{\sfrac{{\tau}}{{_\Cke}}}}
\newcommand{\tauCkquol}{{\sfrac{{\tau}}{{_\Cl}}}}

\newcommand{\Ce}{{\mathrm{G}_1}}
\newcommand{\Ck}{{\mathrm{G}_k}}
\newcommand{\Ckstar}{{\mathrm{G}^*_k}}

\newcommand{\Cke}{{\mathrm{G}_{k+1}}}
\newcommand{\Ckz}{{\mathrm{G}_{k+2}}}
\newcommand{\Ckl}{{\mathrm{G}_{k+l}}}
\newcommand{\Cklstar}{{\mathrm{G}^*_{k+l}}}
\newcommand{\Cl}{{\mathrm{G}_l}}

\newcommand{\Pj}{{\operatorname{P}_j}}
\newcommand{\Pk}{{\operatorname{P}_k}}

\newcommand{\Pkjtimes}{{\operatorname{P}^{(j)}_k}}
\newcommand{\Pke}{{\operatorname{P}_{k+1}}}
\newcommand{\Pkel}{{\operatorname{P}_{k+1+l}}}
\newcommand{\Pkz}{{\operatorname{P}_{k+2}}}
\newcommand{\Pkl}{{\operatorname{P}_{k+l}}}

\newcommand{\hk}{{\mathrm{h}_k}}
\newcommand{\hke}{{\mathrm{h}_{k+1}}}

\newcommand{\imc}{{\mathrm{imc}}}

\newcommand{\istarkal}{{\mathrm{i}^\star_{k,\al}}}

\begin{document}

\title{Uniform generalization of Goodstein's theorem and Cichon's independence proof}

\author{
Gunnar Wilken\footnote{ORCID iD: 0000-0002-4019-9320, E-mail: wilken@oist.jp}\\
Okinawa Institute of Science and Technology, Japan\\
}

\maketitle

\begin{abstract} We uniformly generalize Goodstein's theorem \cite{Goodstein1944} and Cichon's independence proof \cite{Cichon1983} 
via the concept of maximality quotients.\\
Keywords: Proof theory, Independence, Goodstein sequences, Ordinal notations, Fundamental sequences.\\
MSC: 03F40, 03D20, 03D60, 03F15.
\end{abstract}

\section{Introduction}

This article results from a careful inspection of the article \cite{Cichon1983} by Cichon, which shows termination and independence of Goodstein's
famous theorem \cite{Goodstein1944} by a reduction of the provability of Goodstein's theorem to the provability of totality of the Hardy function $H_{\epsn}$, 
while according to \cite{Kreisel1952} Peano Arithmetic ($\PA$) does not prove the totality of $H_{\epsn}$, a result that in turn builds upon Gentzen's 
proof of consistency of $\PA$ by an ordinal analysis that uses transfinite induction up to the proof-theoretic ordinal $\epsn$ of $\PA$, \cite{Gentzen1936,Gentzen1938}.

Cichon's proof is specific for $\PA$ and its proof-theoretic ordinal $\epsn$ in that it uses
the fact that writing a natural number $N$ hereditarily as sum of powers of a base $k\ge2$, by replacing all occurrences of $k$ in the term for $N$ by $\om$, 
one obtains an ordinal $\al<\epsn$ in Cantor normal form that satisfies $\Ck(\al)=N$, where $\Ck$ is a collapsing function defined in subsection 
\ref{CkPksubsec}, while on the other hand any $\al<\epsn$ in Cantor normal form (i.e.\ $\al$ is written hereditarily as a sum of powers of $\om$) 
translates back via replacing $\om$ by $k$ to a term that evaluates to $\Ck(\al)$, cf.\ Lemma 1 and Remarks 1 and 2 of \cite{Cichon1983}.

In this article we uniformly generalize Goodstein's theorem and Cichon's independence proof by exploiting the fact that for a suitable notation
system for an ordinal $\tau$ described in Section \ref{tricksec}, each collapsing function $\Ck$ for $k\ge2$ induces an equivalence relation on the ordinals 
below $\tau$, each class of which contains a maximum, which we use as canonical representative of that equivalence class. Thus, the quotients $\tauCkquok$
induced by $\Ck$ can be identified with the sets of maximal representatives, which is why we call such quotients {\it maximality quotients}, 
and the restriction of $\Ck$ to $\tauCkquok$ then results in an order-isomorphism with $\N$. Given $N\in\N$, we may therefore define the base-$k$ 
representation of $N$ by the unique preimage within $\tauCkquok$ under $\Ck$. This then gives rise to a uniform generalized Goodstein principle 
and related independence result. 

Recent progress related to Goodstein principles was achieved by \cite{AFDWW20} and \cite{AWW21}, elaborating on Arai's idea of normal form representations for 
natural numbers using the Ackermann function (and variants) and also building on earlier ideas by Weiermann in \cite{We17}. Maximality of ordinal indices used in the definition
of normal forms for natural numbers that make use of fast growing functions like (extended variants of) the Ackermann function played a crucial role there as well, and that approach 
was further extended in \cite{FDW24b} and \cite{FDW25} to a broader variety of Goodstein principles for which termination and independence can be proved uniformly via the notion 
of \textit{base-change maximality} of normal form representations of natural numbers. 

The present article emerged from \cite{W}, which started out from a different type of quotient, called $\imc$-quotient, that was motivated by the measure 
$\mc(\al)$, the maximal coefficient of $\al$, for ordinals $\al<\epsn$. $\mc(\al)$ counts the maximum number of times the same additively indecomposable ordinal 
is consecutively added in the Cantor normal form of $\al$ and plays a prominent role in \cite{FDW24b}. In \cite{W} we introduced a measure $\imc(\al)$,
the iterated maximal coefficient of $\al$, where $\al$ is an ordinal in the notation system $\T$ characterizing the proof-theoretic ordinal of the subsystem 
$\pioneonecanod$ of second order number theory, which is the strongest among the so-called big five of reverse mathematics. 
The Bachmann property of a suitable system of fundamental sequences for ordinals in $\T$ was shown in \cite{W26}.
However, closer inspection of \cite{W} revealed the previously overlooked uniformity of the argument that only involves maximality quotients and is therefore independent of a 
concrete notation system. An updated version of \cite{W} now shows the equality of the maximality quotients to their $\imc$-quotient counterparts for the notation system $\T$.
We believe that the characterization of maximality quotients by $\imc$-quotients provides deeper insight into concrete notation systems such as $\T$ and will prove 
useful elsewhere. 

A uniform approach to fundamental sequences that satisfy the Bachmann property was given by Buchholz, Cichon, and Weiermann in \cite{BCW94}.
The approach of \cite{BCW94} yields systems of fundamental sequences satisfying the Bachmann property, with no upper bound on the size of the (proof-theoretic) 
ordinals considered. Such systems are therefore natural candidates for the framework developed here. We expect assumption \ref{masterseqprop} to hold for them, 
as it is quite natural for notation systems, cf.\ the examples in Section \ref{tricksec}, though we do not verify this in general.

\section{A uniform framework for Cichon's trick}\label{tricksec}

Let $(\tau,\cdot[\cdot])$ be a system of ordinal notations with assigned fundamental sequences that satisfy the Bachmann property,
where $\tau$ is a countable (recursive) limit ordinal, e.g.\ the proof-theoretic ordinal of a ($\Pi^1_1$-sound) theory $\mathcal{T}$ for which a proper 
($\Pi^0_2$-) ordinal analysis and the unprovability of totality of $H_\tau$ within $\mathcal{T}$ are available, where $H_\tau$ is defined with respect to 
a ``master'' fundamental sequence $(\tau_n)_{n<\om}$ described below. 
For the system of fundamental sequences we assume the following to hold:
\begin{enumerate}
\item $0[n]=1[n]=0$ for any $n<\om$,
\item $(\xi+\eta)[n]=\xi+\eta[n]$ for any $\xi,\eta<\tau$ and $n<\om$, provided that $\eta>0$,
\item for any limit ordinal $\la<\tau$, the sequence $(\la[n])_{n<\om}$ is strictly increasing with supremum $\la$ and $\la[0]>0$,
\item \emph{Bachmann property:} For any $\al,\be<\tau$ and $n<\om$, if $\al[n]<\be<\al$, then $\al[n]\le\be[0]$, and
\item\label{masterseqprop} for the (strictly increasing) master fundamental sequence, in short: \emph{master sequence}, $(\tau_n)_{n<\om}$, 
we have $\sup\{\tau_n\mid n<\om\}=\tau$ and $\tau_{n+1}[0]=\tau_n$ for all $n<\om$, and we also write $\tau[n]$ for $\tau_n$, slightly abusing notation. 
\end{enumerate}
Such master sequences as in assumption \ref{masterseqprop} are quite canonical in classical notation systems. 
This also pertains to the required property $\tau_{n+1}[0]=\tau_n$ for all $n$, which crucially applies in the proof of Lemma \ref{masterseqmaxlem}.

\bigskip

\noindent {\it Examples for master sequences:} 
\begin{enumerate}
\item In the case of $\epsilon_0$, the proof-theoretical ordinal of Peano Arithmetic $\PA$,
the towers of $\om$-exponentiation together with the classical system of fundamental sequences 
\[0[k]:=0,\;\;1[k]:=0,\;\;(\xi+\eta)[k]:=\xi+\eta[k],\;\;\om^{\be+1}[k]:=\om^\be\cdot(k+1),\;\mbox{ and }\;\om^\la[k]:=\om^{\la[k]},\]
satisfy $\om_{n+1}[0]=\om_n$, where $\om_0:=1$ and $\om_{n+1}=\om^{\om_n}$, cf.\ \cite{Cichon1983}. 
\item For the notation system of strength $\pioneonecanod$, for
which the Bachmann property was shown in \cite{W26}, the master sequence is $(\thtnod(\ldots(\thtn(0))\ldots))_{n<\om}$, which is the sequence starting
with $\om,\epsilon_0,$ and then enumerating the proof-theoretic ordinals of systems $\mathrm{ID}_n$, $0<n<\om$, of $n$-times iterated 
inductive definitions. 
\end{enumerate}

\subsection{Slow growing hierarchy and predecessor operations}\label{CkPksubsec}
The slow-growing hierarchy written as collapsing function $\Ck:\tau\to\om$ is defined for $k>0$ by\footnote{Induction on $\al$ shows that our definitions are 
equivalent to Cichon's in \cite{Cichon1983} for $k>0$, where Cichon's $\al[k]$ is equal to our $\al[k-1]$.} 
\begin{equation}\label{slowgrowingeq} \Ck(0):=0,\: \Ck(\al+1):=\Ck(\al)+1, \mbox{ and }\Ck(\la):=\Ck(\la[k-1])\mbox{ for }\la\in\Lim,\end{equation} 
where $\Lim$ denotes the class of limit ordinals, and the predecessor operations $\Pk:\tau\to\tau$ for $k>0$, are defined equivalently as in \cite{Cichon1983} by
\begin{equation}\label{predeq} \Pk(0):=0,\: \Pk(\al+1):=\al, \mbox{ and }\Pk(\la):=\Pk(\la[k-1])\mbox{ for }\la\in\Lim.\end{equation}

Note first of all that Cichon's crucial lemma also holds in our context, with the same proof by induction on $\al$:
\begin{lem}[cf.\ Lemma 2 of \cite{Cichon1983}]\label{tricklem} For $k>0$ and $\al<\tau$ the operations $\Ck$ and $\Pk$ commute, thus \[\Ck\Pk\al=\Pk\Ck\al,\]
where the latter is equal to $\Ck\al-1$. \qed
\end{lem}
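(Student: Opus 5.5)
We prove, by induction on $\al<\tau$ and for fixed $k>0$, the combined statement
\[\Ck\Pk\al=\Ck\al-1,\]
where $-$ denotes truncated subtraction on $\N$. Since $\Pk$ acting on natural numbers (viewed as finite ordinals) is just $n\mapsto n-1$, we have $\Pk\Ck\al=\Ck\al-1$ directly from (\ref{predeq}). The commutation $\Ck\Pk\al=\Pk\Ck\al$ therefore follows at once from the displayed identity. We only use the defining equations (\ref{slowgrowingeq}) and (\ref{predeq}) and the basic assumption $0[n]=0$. The Bachmann property is not needed here.

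The induction splits into three cases according to the form of $\al$.
\begin{enumerate}
\item \emph{Zero.} For $\al=0$ both sides equal $0$, since $\Pk0=0$ and $\Ck0=0$.
\item \emph{Successor.} For $\al=\be+1$ we have $\Ck\Pk\al=\Ck\be$. On the other side, $\Ck\al-1=(\Ck\be+1)-1=\Ck\be$. No induction hypothesis is needed.
\item \emph{Limit.} For $\al=\la\in\Lim$ we have $\la[k-1]<\la$ by assumption 3 on fundamental sequences, so the induction hypothesis applies to $\la[k-1]$. Using (\ref{predeq}), then the hypothesis, then (\ref{slowgrowingeq}):
\[\Ck\Pk\la=\Ck\Pk(\la[k-1])=\Ck(\la[k-1])-1=\Ck\la-1.\]
\end{enumerate}

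The only point needing care is well-foundedness in the limit case: we need $\la[k-1]<\la$ so that the recursion terminates. Assumption 3 guarantees this, since the fundamental sequence of $\la$ is strictly increasing with supremum $\la$. The same fact shows that $\Ck$ and $\Pk$ are well defined by transfinite recursion in the first place. The rest is routine bookkeeping.

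The index shift relative to Cichon (our $\la[k-1]$ is his $\la[k]$) is harmless: $\Ck$ and $\Pk$ descend along the same index $k-1$, so the two recursions stay synchronized.
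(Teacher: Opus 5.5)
Your proof is correct and takes the same approach the paper indicates, namely induction on $\al$ as in Cichon's Lemma 2, where the limit case works because both $\Ck$ and $\Pk$ recur to $\la[k-1]<\la$. One small slip: you say you use the assumption $0[n]=0$, but your argument never actually relies on it, which is harmless.
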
 

We denote iterations of descendance along fundamental sequences as follows: \[\al[n]^0:=\al\quad\mbox{ and }\quad\al[n]^{i+1}:=\al[n]^i[n].\]
For $k>0$ let $\istarkal$ be the minimal $i$ such that $\al[k-1]^i=0$.
\begin{lem}\label{Ckvaluelem} Let $k>0$ and $\al<\tau$.
\begin{enumerate}
\item The sequence $(\Ck(\al[k-1]^i))_{i\le\istarkal}$ is weakly decreasing and passing through each $m\le\Ck(\al)$, 
such that for $i<\istarkal$ we have
\[\Ck(\al[k-1]^{i+1})<\Ck(\al[k-1]^i)\;\Longleftrightarrow\;\al[k-1]^i\mbox{  is a successor ordinal.}\] 
\item Let $i_1,\ldots,i_n$ be the increasing enumeration of those $i<\istarkal$ that satisfy $\Ck(\al[k-1]^{i+1})<\Ck(\al[k-1]^i)$. 
We have $n=\Ck(\al)$ by definition, $n>0$ if and only if $\al>0$ according to our assumptions regarding $(\tau,\cdot[\cdot])$, and 
for $j\in\{1,\ldots,n\}$ we have
\[\Pkjtimes(\al)=\al[k-1]^{i_j+1}.\]
\end{enumerate}
\end{lem}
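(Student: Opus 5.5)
The plan is to read everything off the defining recursions (\ref{slowgrowingeq}) and (\ref{predeq}), applied step by step along the descent $\al[k-1]^0,\al[k-1]^1,\ldots$. Write $\al_i:=\al[k-1]^i$.

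First we check that $\istarkal$ exists. If $\al_i>0$, then either $\al_i$ is a successor, say $\al_i=\be+1$, or $\al_i$ is a limit. In the successor case, assumption 2 (with $\eta=1$) and assumption 1 give $\al_{i+1}=(\be+1)[k-1]=\be+1[k-1]=\be$, so $\al_{i+1}<\al_i$. For $\be=0$ this is covered by $1[n]=0$. In the limit case, assumption 3 gives $\al_{i+1}<\al_i$. Thus the sequence strictly decreases while it is positive, so by well-foundedness it reaches $0$, and $\istarkal$ is well defined.

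For part 1 we take $i<\istarkal$, so that $\al_i>0$, and distinguish the two cases.
\begin{itemize}
\item If $\al_i$ is a limit, then $\Ck(\al_i)=\Ck(\al_i[k-1])=\Ck(\al_{i+1})$ by definition, so the value stays the same.
\item If $\al_i=\be+1$, then $\al_{i+1}=\be$ as above, and $\Ck(\al_{i+1})=\Ck(\al_i)-1$.
\end{itemize}
Hence the sequence of values is weakly decreasing, it changes only at successor terms, and each change is exactly by one. It starts at $\Ck(\al)$ and ends at $\Ck(0)=0$. So it passes through every $m\le\Ck(\al)$, and the stated equivalence holds.

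For part 2, the count $n=\Ck(\al)$ follows directly from part 1, since there are exactly $\Ck(\al)$ unit drops. Also $n>0$ if and only if $\Ck(\al)>0$. We show that $\Ck(\al)>0$ whenever $\al>0$: the descent from $\al>0$ to $0$ must contain a successor term, because a limit term never steps directly to $0$ (assumption 3 gives $\la[0]>0$, and the sequence is increasing, so $\la[k-1]>0$).

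The identity $\Pkjtimes(\al)=\al_{i_j+1}$ is proved by induction on $j$. The key auxiliary fact is the following: if $\ga$ is a limit, then $\Pk(\ga)=\Pk(\ga[k-1])$. Iterating this along the stretch of limit terms gives $\Pk(\al_i)=\Pk(\al_{i'})$, where $i'$ is the first successor index $\ge i$ among $i<\istarkal$. At that successor index, $\Pk(\al_{i'})=\al_{i'}-1=\al_{i'+1}$.
\begin{itemize}
\item For $j=1$, take $i=0$. The first successor index is $i_1$, so $\Pk(\al)=\al_{i_1+1}$.
\item For the step from $j$ to $j+1$, apply the same argument starting at $i=i_j+1$. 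The next successor index is $i_{j+1}$, which exists because $j<n$. This gives $\Pk(\Pkjtimes(\al))=\al_{i_{j+1}+1}$.
\end{itemize}
The only point needing care is that between $i_j+1$ and $i_{j+1}$ all terms are limits. This holds by the definition of the enumeration together with the equivalence from part 1.

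The main obstacle is the bookkeeping at the boundary: making sure that successor terms step by exactly one. This is the identity $(\be+1)[k-1]=\be$, which uses assumptions 1 and 2. We also need that the process never gets stuck at $0$ before the last drop. The Bachmann property does not appear to be needed here.
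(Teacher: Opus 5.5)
Your proof is correct and follows essentially the same route as the paper. Both arguments read everything off the defining recursions: the step $(\be+1)[l]=\be$ at successors, the fact that $\Ck$ and $\Pk$ both recur to $\la[k-1]$ at limits, and $\la[0]>0$ for the claim that $n>0$ iff $\al>0$. You are more explicit than the paper: you verify that $\istarkal$ exists, carry out the induction on $j$, and get $n=\Ck(\al)$ by counting unit drops, where the paper instead also cites Lemma \ref{tricklem}.
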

{\bf Proof.} This is an immediate consequence of the fact that $(\be+1)[l]=\be$ for any $\be<\tau$ and $l<\om$, and of the definitions of $\Ck$ and $\Pk$ and 
Lemma \ref{tricklem}: Note that both $\Ck$ and $\Pk$ recur to $\la[k-1]$ at limits $\la$  and observe that for $\al>0$ the sequence $(\Pkjtimes(\al))_{j\le n}$ 
results from the sequence $(\al[k-1]^i)_{i\le\istarkal}$ by omitting exactly those elements $\al[k-1]^{i+1}$ (where $i<\istarkal$) such that both $\al[k-1]^{i+1}$ and $\al[k-1]^i$ are limits. 

Note that in part 2 we have $n>0$ if and only if $\al>0$ due to our assumption that for limits $\la<\tau$ we have $\la[0]>0$. This assumption, which we consider to be
more natural than allowing $\la[0]=0$ for any limit $\la$, enables part 2 to hold for all $k>0$. By definition of $\Ck$ and $\Pk$ we have $\Ck(\al)=\min\{j\mid\Pkjtimes(\al)\}$,
and by definition of $\istarkal$ we have $\istarkal=i_n+1$.
\qed

\subsection{Maximality quotients}\label{maxquotientssubsec}

\begin{lem}\label{Bachmannintervallem}
For any $j<\om$,  $k\in[2,\om)$, and $\al,\be<\tau$ such that $\al\in(\be[j],\be)$ we have \[\Ck(\al)>\Ck(\be[j])\] and $\be[j]=\al[k-1]^{h+1}$ for some $h$ where $\al[k-1]^h$ is a successor.
\end{lem}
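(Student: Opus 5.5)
The plan is to follow the descending sequence $(\al[k-1]^i)_{i\le\istarkal}$ from $\al$ down to $0$. Using the Bachmann property, I will show that this sequence cannot jump over $\be[j]$, so it hits $\be[j]$ exactly. Then I will show that the step landing on $\be[j]$ starts from a successor ordinal. The inequality for $\Ck$ then follows from Lemma \ref{Ckvaluelem}.

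First, some preliminary remarks. The interval $(\be[j],\be)$ is nonempty, so $\be$ must be a limit: for $\be=\ga+1$ we have $\be[j]=\ga$, and $(\ga,\ga+1)$ is empty. For any $\ga>0$ we have $\ga[k-1]<\ga$, both for successors and, by assumption 3, for limits. Hence the sequence $(\al[k-1]^i)_i$ is strictly decreasing until it reaches $0$ at $i=\istarkal$. Moreover, for every $\ga$ we have $\ga[0]\le\ga[k-1]$. For successors the two are equal. For limits this follows from strict monotonicity of fundamental sequences, and the inequality is even strict when $k\ge2$, i.e.\ when $k-1>0$.

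Key step (no overshooting): let $\ga<\tau$ be any ordinal with $\be[j]<\ga<\be$. The Bachmann property, applied to $\be$ with index $j$ and the intermediate ordinal $\ga$, gives $\be[j]\le\ga[0]\le\ga[k-1]$. Now $\al\in(\be[j],\be)$, and every $\al[k-1]^i$ is $\le\al<\be$. Let $i$ be minimal with $\al[k-1]^i\le\be[j]$; such an $i$ exists because the sequence reaches $0$. Then $i\ge1$, and for $h:=i-1$ the ordinal $\ga:=\al[k-1]^h$ lies in $(\be[j],\be)$. The key step gives $\al[k-1]^{h+1}=\ga[k-1]\ge\be[j]$, while minimality of $i$ gives $\al[k-1]^{h+1}\le\be[j]$. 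Hence $\be[j]=\al[k-1]^{h+1}$.

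The main point, and the only place where $k\ge2$ is needed, is that $\ga=\al[k-1]^h$ is a successor. Suppose $\ga$ were a limit. Since $k-1>0$, we would get $\be[j]\le\ga[0]<\ga[k-1]=\be[j]$, a contradiction. So $\ga$ is a successor. In particular $h<\istarkal$, since $\ga>\be[j]\ge0$. By Lemma \ref{Ckvaluelem}, part 1, the values $\Ck(\al[k-1]^i)$ are weakly decreasing in $i$ and drop strictly exactly at successor entries. Therefore
\[\Ck(\al)\ge\Ck(\al[k-1]^h)>\Ck(\al[k-1]^{h+1})=\Ck(\be[j]),\]
which completes the argument.
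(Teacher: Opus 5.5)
Your proof is correct and follows essentially the paper's argument. You take the least index at which the descent $\al[k-1]^i$ drops to $\le\be[j]$, observe that the preceding entry lies in $(\be[j],\be)$, and use the Bachmann property together with $\ga[0]<\ga[k-1]$ for limits $\ga$ (the only place $k\ge2$ matters) to force that entry to be a successor landing exactly on $\be[j]$; you then conclude via Lemma \ref{Ckvaluelem}. The only difference from the paper is the order of two sub-steps: you get the equality $\al[k-1]^{h+1}=\be[j]$ first from Bachmann plus minimality and then exclude limits, while the paper excludes limits first and reads off the equality from the successor step.
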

{\bf Proof.} Note first, that if $\al\in\Lim$, due to the Bachmann property we have $\be[j]\le\al[0]<\al[k-1]$ since by assumption $k\ge2$.\footnote{Here we essentially need the 
assumption $k\ge2$. For $k=1$ we would have $\al[0]=\al[k-1]$ and could easily find counterexamples: For instance, consider $\tau=\epsn$ with the system of 
fundamental sequences specified in the first example at the beginning of Section \ref{tricksec}. We have $\om^2\in(\om,\om^\om)$ and $\om^\om[0]=\om$, while
$\Ce(\om^2)=\Ce(\om)$ since $\om^2[0]=\om$.}  
Now, for any $\al$ according to the lemma's assumptions, descending from $\al$ can only reach $\be[j]$ after passing through successor stages: for the least $h$ such that 
$\al[k-1]^{h+1}\le\be[j]$, which exists since the iterated application of $\cdot[k-1]$ yields a strictly decreasing sequence until eventually reaching $0$, the ordinal $\al[k-1]^h$ 
must be a successor, whence $\al[k-1]^{h+1}=\be[j]$. Thus $\Ck(\al)>\Ck(\be[j])$ by Lemma \ref{Ckvaluelem}. 
\qed

\begin{lem}\label{masterseqmaxlem}
For any $k\ge2$ the sequence $(\Ck(\tau_n))_{n<\om}$ is strictly increasing, and each $\tau_n$ is maximal with respect to $\Ck$.
\end{lem}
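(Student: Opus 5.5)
Throughout I read ``$\al$ is maximal with respect to $\Ck$'' as: every $\be$ with $\al<\be<\tau$ satisfies $\Ck(\be)>\Ck(\al)$. The plan is to use only assumption \ref{masterseqprop}, namely $\tau_{n+1}[0]=\tau_n$ and $\sup_n\tau_n=\tau$, together with Lemma \ref{Bachmannintervallem} applied with $j=0$.

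\emph{Strict monotonicity.} Fix $n$ and split according to whether $\tau_{n+1}$ is a successor or a limit.
\begin{enumerate}
\item If $\tau_{n+1}=\ga+1$, then $\ga=(\ga+1)[0]=\tau_n$. The definition (\ref{slowgrowingeq}) gives $\Ck(\tau_{n+1})=\Ck(\tau_n)+1$ directly.
\item If $\tau_{n+1}\in\Lim$, then $(\tau_{n+1}[m])_m$ is strictly increasing. Since $k\ge2$, this gives
\[\tau_n=\tau_{n+1}[0]<\tau_{n+1}[k-1]<\tau_{n+1}.\]
So $\al:=\tau_{n+1}[k-1]$ lies in the interval $(\be[0],\be)$ with $\be:=\tau_{n+1}$. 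Lemma \ref{Bachmannintervallem} then yields $\Ck(\tau_{n+1}[k-1])>\Ck(\tau_n)$. By (\ref{slowgrowingeq}) the left-hand side equals $\Ck(\tau_{n+1})$.
\end{enumerate}
In both cases $\Ck(\tau_{n+1})>\Ck(\tau_n)$. By transitivity, $\Ck(\tau_m)>\Ck(\tau_n)$ whenever $m>n$.

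\emph{Maximality.} Let $\tau_n<\al<\tau$. Because $\sup_m\tau_m=\tau$, there is a least $m$ with $\al<\tau_m$. Then $m>n$ and $\tau_{m-1}\le\al<\tau_m$, where $m\ge1$ because $m>n\ge0$.
\begin{enumerate}
\item If $\al=\tau_{m-1}$, then $m-1>n$, since $\al\ne\tau_n$. Monotonicity gives $\Ck(\al)>\Ck(\tau_n)$.
\item Otherwise $\al\in(\tau_m[0],\tau_m)$, since $\tau_m[0]=\tau_{m-1}$. Lemma \ref{Bachmannintervallem} with $\be=\tau_m$ and $j=0$ gives $\Ck(\al)>\Ck(\tau_{m-1})$. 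Monotonicity gives $\Ck(\tau_{m-1})\ge\Ck(\tau_n)$, because $m-1\ge n$.
\end{enumerate}
Either way $\Ck(\al)>\Ck(\tau_n)$, so no ordinal above $\tau_n$ shares its $\Ck$-value, and $\tau_n$ is the maximum of its class.

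\emph{Main obstacle.} I expect the only delicate point to be the applicability of Lemma \ref{Bachmannintervallem}. This is exactly where $k\ge2$ and the coherence condition $\tau_{m}[0]=\tau_{m-1}$ enter. They make the open intervals $(\tau_{m-1},\tau_m)$, together with the points $\tau_m$ themselves, cover $(\tau_n,\tau)$. They also ensure each interval is of the form $(\be[0],\be)$ to which the lemma applies. A side point is the possibility that some $\tau_m$ is a successor. This is harmless: then $\tau_m=\tau_{m-1}+1$, the corresponding interval is empty, and only the successor clause of (\ref{slowgrowingeq}) is used.
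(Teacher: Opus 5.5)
Your proof is correct and follows essentially the same route as the paper. For strict monotonicity you use $\tau_{n+1}[0]=\tau_n$ together with the descent argument behind Lemma \ref{Bachmannintervallem}; you make the successor/limit split explicit and apply that lemma to $\tau_{n+1}[k-1]$ where the paper only cites its proof. For maximality in the strong form you place $\al$ with $\tau_{m-1}\le\al<\tau_m$ and apply Lemma \ref{Bachmannintervallem} with $j=0$, which is exactly the paper's argument.
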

{\bf Proof.} Due to the Bachmann property the sequence $(\tau_{n+1}[k-1]^i)_{i<\om}$ passes through $\tau_{n+1}[0]=\tau_n$ after hitting successor stages, 
cf.\ the proof of Lemma \ref{Bachmannintervallem}, hence $\Ck(\tau_{n+1})>\Ck(\tau_n)$. Thus, the sequence $(\Ck(\tau_n))_{n<\om}$ is strictly increasing. 

Moreover, for any $\be\in(\tau_j,\tau_{j+1})$, as $\tau_{j+1}[0]=\tau_j$, Lemma \ref{Bachmannintervallem} yields $\Ck(\be)>\Ck(\tau_j)$.
From this we conclude that $\Ck(\be)>\Ck(\tau_n)$ holds for all $\be>\tau_n$. Indeed, picking $j\ge n$ minimally such that $\be<\tau_{j+1}$, we have $\tau_j\le\be<\tau_{j+1}$.
The situation $\be=\tau_j$ implies $j>n$ since $\be>\tau_n$, and is covered by the first paragraph; and if $\be>\tau_j$, we obtain $\Ck(\be)>\Ck(\tau_j)\ge\Ck(\tau_n)$. 
Hence each $\tau_n$ is maximal. 
\qed

Even though we do not need the following lemma downstream, we consider it worth spelling out. So far, by Lemma \ref{masterseqmaxlem} we have the maximality with respect
to $\Ck$ of the elements $\tau_n$ of the master sequence, even shown in the strong form that for all $\be>\tau_n$ we have $\Ck(\be)>\Ck(\tau_n)$. We now show that maximality
always implies this strong form of maximality.
\begin{lem}\label{strongmaxlem} Let $k\ge2$ and suppose $\al<\tau$ to be maximal among all $\ga<\tau$ such that $\Ck(\ga)=\Ck(\al)$, i.e.\ {\it maximal with respect to $\Ck$}. 
Then for any $\be\in(\al,\tau)$ we have $\Ck(\be)>\Ck(\al)$.
\end{lem}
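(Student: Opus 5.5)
The plan is a transfinite induction on $\be$. For fixed $k\ge2$ and $\al$ maximal with respect to $\Ck$, I would prove that $\Ck(\be)>\Ck(\al)$ for every $\be\in(\al,\tau)$, using as induction hypothesis that the claim holds for all $\be'\in(\al,\be)$. I would argue via a minimal counterexample. Suppose $\be>\al$ is least with $\Ck(\be)\le\Ck(\al)$; by maximality of $\al$ we then have $\Ck(\be)<\Ck(\al)$.

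The first steps are routine.
\begin{itemize}
\item If $\be=\de+1$, then $\de\ge\al$. Either $\de=\al$, or $\Ck(\de)>\Ck(\al)$ by minimality. In both cases $\Ck(\be)=\Ck(\de)+1>\Ck(\al)$, a contradiction.
\item If $\be$ is a limit, $\Ck(\be)=\Ck(\be[k-1])$, and there are three subcases.
  \begin{itemize}
  \item If $\be[k-1]>\al$, minimality gives $\Ck(\be[k-1])>\Ck(\al)$, a contradiction.
  \item If $\be[k-1]=\al$, then $\Ck(\be)=\Ck(\al)$ with $\be>\al$, contradicting the maximality of $\al$.
  \item The remaining subcase is $\be[k-1]<\al<\be$. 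Lemma \ref{Bachmannintervallem} (with $j=k-1$) gives $\Ck(\al)>\Ck(\be[k-1])=\Ck(\be)$ and $\be[k-1]=\al[k-1]^{h+1}$ for some $h$ with $\al[k-1]^h$ a successor. This is consistent with the assumed counterexample, so maximality of $\al$ has to be used to rule it out.
  \end{itemize}
\end{itemize}

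To handle this last subcase I would exhibit some $\de\in(\al,\be)$ with $\Ck(\de)=\Ck(\al)$; this contradicts either maximality or minimality of $\be$. The plan is as follows.
\begin{itemize}
\item Choose the least $n$ with $\be[n]>\al$. Since $\be[k-1]<\al$, we have $n\ge k$, and hence $\be[n-1]\le\al<\be[n]<\be$.
\item By minimality, $\Ck(\be[n])>\Ck(\al)$.
\item Descend from $\be[n]$ along $\cdot[k-1]$. Lemma \ref{Bachmannintervallem}, applied to $\be[n]\in(\be[n-1],\be)$ (or with equality if $\be[n-1]=\al$), shows this chain reaches $\be[n-1]$. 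It does so only through successor stages, where by Lemma \ref{Ckvaluelem} the value of $\Ck$ drops by exactly one at a time.
\item Since $\Ck(\be[n-1])\le\Ck(\al)$, the chain passes through an element with value exactly $\Ck(\al)$.
\item Let $\ga$ be the last chain element above $\al$, so $\ga[k-1]\le\al<\ga$.
  \begin{itemize}
  \item If $\ga$ is a limit with $\ga[k-1]<\al$, minimality gives $\Ck(\ga)>\Ck(\al)$, while Lemma \ref{Bachmannintervallem} gives $\Ck(\ga)=\Ck(\ga[k-1])<\Ck(\al)$. This is impossible.
  \item If $\ga$ is a limit with $\ga[k-1]=\al$, then $\Ck(\ga)=\Ck(\al)$, which contradicts maximality.
  \item Hence $\ga=\al+1$, and $\al$ itself lies on the chain from $\be[n]$.
  \end{itemize}
\item All chain elements strictly between $\al$ and $\be[n]$ have $\Ck$-value larger than $\Ck(\al)$ by minimality. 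So the value drops to $\Ck(\al)$ exactly at $\al$.
\end{itemize}

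The main obstacle is turning this into a contradiction with $\be$ itself in the case $n=k$, where the $\cdot[k-1]$-descent from $\be$ bypasses $\be[k]$ entirely. There I would try to compare the chains from $\be[k]$ and from $\al$ down to $\be[k-1]$, using the Bachmann property for every limit on the chain from $\be[k]$. The aim is to show that $\be[k]$'s chain must contain a limit $\ga>\al$ with $\ga[k-1]=\al$, which would contradict maximality of $\al$. If this direct comparison fails, the fallback is to strengthen the induction statement to one about entire descent chains through $\al$.
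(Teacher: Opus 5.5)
Your reductions are sound. The successor case and the limit subcases $\be[k-1]>\al$ and $\be[k-1]=\al$ are handled correctly, and you arrive at the same configuration the paper reduces to: $\be\in\Lim$ with $\be[k-1]<\al<\be$ and $\Ck(\be)<\Ck(\al)$. But you never close this subcase, and the strategy you sketch cannot close it.

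Everything your chain analysis uses concerns only ordinals up to $\be$: minimality of $\be$, maximality of $\al$ tested only against ordinals in $(\al,\be]$, the Bachmann property, and Lemmas \ref{Ckvaluelem} and \ref{Bachmannintervallem}. This local information is consistent with $\al$ not being maximal. Take $\tau=\epsn$ with the standard fundamental sequences, $k=2$ and $\al=\om^2+1$:
\begin{itemize}
\item $\mathrm{G}_2(\al)=5$.
\item The least $\be>\al$ with $\mathrm{G}_2(\be)\le5$ is $\be=\om^\om$, whose value is $4$.
\item $\be[1]=\om^2<\al<\om^3=\be[2]$, so this is exactly your case $n=k$.
\item No ordinal in $(\al,\be]$ has value $5$.
\item The $\cdot[1]$-chain from $\be[2]$ is $\om^3,\om^2\cdot2,\om^2+\om\cdot2,\om^2+\om+2,\om^2+\om+1,\om^2+\om,\om^2+2,\al$, and no limit $\ga$ on it satisfies $\ga[1]=\al$.
\end{itemize}
Likewise $\al=3$, $\be=\om$ gives such an instance with $n=3>k$, so the problem is not specific to $n=k$. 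In both instances $\al$ fails to be maximal only because $\be+1$ has the same value as $\al$, and $\be+1$ lies above $\be$. Every fact your argument invokes holds here, yet no $\delta\in(\al,\be)$ with $\Ck(\delta)=\Ck(\al)$ exists. So neither comparing chains nor strengthening the induction over descent chains below $\be$ can yield the contradiction.

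The missing idea is to look above $\be$. Put $l:=\Ck(\al)-\Ck(\be)$. Since $\tau$ is a limit, $\be+l<\tau$. Moreover $\Ck(\be+l)=\Ck(\be)+l=\Ck(\al)$ while $\be+l\ge\be>\al$, which contradicts the maximality of $\al$.

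This is exactly how the paper finishes. It first reduces to $\al\in(\be[k-1],\be)$, using the unique $i$ with $\be[k-1]^{i+1}\le\al<\be[k-1]^i$ together with Lemma \ref{Bachmannintervallem}, and then invokes $\be+l$. Note that the $\be+l$ observation works for any $\be\in(\al,\tau)$ with $\Ck(\be)\le\Ck(\al)$. It is the same reasoning you already used to exclude $\Ck(\be)=\Ck(\al)$, so it proves the lemma outright. With it, the minimal counterexample and the entire case analysis are unnecessary.
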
 
{\bf Proof.} Assume to the contrary that there existed a least $\be>\al$ such $\Ck(\be)\le\Ck(\al)$. As the sequence $(\be[k-1]^j)_{j<\om}$ strictly decreases until reaching $0$, 
there exists a unique $i$ such that $\be[k-1]^{i+1}\le\al<\be[k-1]^i$. 
We can not have $\al=\be[k-1]^{i+1}$: in case of $\be[k-1]^i\in\Lim$ we would have $\Ck(\al)=\Ck(\be[k-1]^i)$, contradicting the maximality of $\al$, and if
$\be[k-1]^i=\be[k-1]^{i+1}+1$, we would have $\Ck(\al)<\Ck(\be[k-1]^i)\le\Ck(\be)$ by Lemma \ref{Ckvaluelem}, contradicting our assumption.
Thus $\al\in(\be[k-1]^{i+1},\be[k-1]^i)$, whence $\be[k-1]^i\in\Lim$ and $\Ck(\be[k-1]^{i+1})=\Ck(\be[k-1]^i)$. 
As a consequence of the Bachmann property, according to Lemma \ref{Bachmannintervallem} we would then have $\Ck(\al)>\Ck(\be[k-1]^i)$, 
which in case of $i>0$ contradicts the minimality of $\be$. We would thus have $\al\in(\be[k-1],\be)$, and setting $l:=\Ck(\al)-\Ck(\be)$ we would obtain
$\Ck(\be+l)=\Ck(\be)+l=\Ck(\al)$ while $\be+l>\al$, contradicting the maximality of $\al$. 
\qed
 
 \begin{lem}\label{Pkiterationmaxlem}
 For $k\ge2$ and $n<\om$ set $\al:=\tau_n$ and $m_i:=\Ck(\al[k-1]^i)$ for $i\le\istarkal$. We then have
 \begin{enumerate}
 \item $\Ck(\be)\ge m_i$ for all $\be\ge\al[k-1]^i$ and $i\le\istarkal$.
 \item $\Pkjtimes(\al)$ is maximal with respect to $\Ck$ for all $j\le\Ck(\al)$. 
 \item $\Ck(\Pkjtimes(\al))=\Ck(\al)-j$ for $j\le\Ck(\al)$.
 \item For each $n_0\le n$ there exists a $j\le\Ck(\al)$ such that $\tau_{n_0}=\Pkjtimes(\al)$.
  \end{enumerate}
 \end{lem}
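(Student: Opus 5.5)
The plan is to prove part 1 first and derive the other parts from it. Parts 2 and 3 then follow from part 1 together with Lemma \ref{Ckvaluelem}. Part 4 is handled separately by induction along the master sequence.

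\emph{Part 1.} I will argue by induction on $i\le\istarkal$. For $i=0$ we have $\al[k-1]^0=\tau_n$, and the claim is exactly the strong maximality of $\tau_n$ shown in Lemma \ref{masterseqmaxlem}: $\Ck(\be)>\Ck(\tau_n)$ for $\be>\tau_n$. For the step, fix $\be\ge\al[k-1]^{i+1}$ and distinguish three cases.
\begin{itemize}
\item If $\be\ge\al[k-1]^i$, the induction hypothesis gives $\Ck(\be)\ge m_i\ge m_{i+1}$, using the weak monotonicity from Lemma \ref{Ckvaluelem}.
\item If $\be=\al[k-1]^{i+1}$, the bound is trivial.
\item Otherwise $\be\in(\al[k-1]^i[k-1],\al[k-1]^i)$. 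Lemma \ref{Bachmannintervallem}, applied with $j=k-1$ and the interval endpoint $\al[k-1]^i$, yields $\Ck(\be)>\Ck(\al[k-1]^{i+1})=m_{i+1}$.
\end{itemize}
So part 1 reduces directly to Lemma \ref{Bachmannintervallem} and the base case. Strengthening the statement to a strict inequality for $\be>\al[k-1]^i$ costs nothing, since the same case analysis gives strictness.

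\emph{Parts 2 and 3.} By Lemma \ref{Ckvaluelem}(2), $\Pkjtimes(\al)=\al[k-1]^{i_j+1}$. Lemma \ref{tricklem}, iterated, gives $\Ck(\Pkjtimes\al)=\Ck(\al)-j$, which is part 3; equivalently $m_{i_j+1}=\Ck(\al)-j$. For maximality, suppose $\ga>\Pkjtimes(\al)$. I will show $\Ck(\ga)>\Ck(\al)-j$ using the strict version of part 1 with $i=i_j+1$. The point to check is that the strict version really holds at this index, so that no $\ga$ above $\al[k-1]^{i_j+1}$ can have the same $\Ck$-value. For $\ga$ inside the interval $(\al[k-1]^{i_j+1},\al[k-1]^{i_j})$, I need to rule out this interval containing anything at all. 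This holds because $\al[k-1]^{i_j}$ is a successor with predecessor $\al[k-1]^{i_j+1}$, so the interval is empty. For $\ga\ge\al[k-1]^{i_j}$, part 1 gives $\Ck(\ga)\ge m_{i_j}>m_{i_j+1}$. The case $j=0$ is Lemma \ref{masterseqmaxlem}.

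\emph{Part 4.} I will use downward induction on $n_0\le n$. The case $n_0=n$ is $j=0$. For the step, suppose $\tau_{n_0+1}=\Pkjtimes(\al)=\al[k-1]^{i}$ for a suitable $i$. Since $\tau_{n_0+1}[0]=\tau_{n_0}$ and $k\ge2$, the proof of Lemma \ref{masterseqmaxlem}, via Lemma \ref{Bachmannintervallem}, shows that the descent $\tau_{n_0+1}[k-1]^h$ hits $\tau_{n_0}$ as $\tau_{n_0+1}[k-1]^{h+1}$ with $\tau_{n_0+1}[k-1]^h$ a successor. This uses that $\tau_{n_0+1}[k-1]>\tau_{n_0+1}[0]$, so $\tau_{n_0}$ lies strictly below the first step. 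Concatenating the two descents, $\tau_{n_0}=\al[k-1]^{i+h+1}$ where $\al[k-1]^{i+h}$ is a successor, so $i+h=i_{j'}$ for some $j'$. Lemma \ref{Ckvaluelem}(2) then gives $\tau_{n_0}=\operatorname{P}_k^{(j')}(\al)$.

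The main obstacle I expect is the bookkeeping in part 2: making sure that a successor step really leaves no ordinals strictly between, and that part 1's bound is strict in the right places. The remaining steps are direct applications of the earlier lemmas.
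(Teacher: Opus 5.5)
Your proposal is correct and follows essentially the same route as the paper: part 1 by induction on $i$ with Lemma \ref{masterseqmaxlem} as base and Lemma \ref{Bachmannintervallem} for the step, parts 2 and 3 via $\Pkjtimes(\al)=\al[k-1]^{i_j+1}=\al[k-1]^{i_j}-1$ and iterated Lemma \ref{tricklem}, and part 4 via the descent from $\tau_{n_0+1}$ to $\tau_{n_0}=\tau_{n_0+1}[0]$. One caveat: your aside that the strict inequality ``costs nothing'' for all $\be>\al[k-1]^i$ is false whenever $\al[k-1]^{i-1}$ is a limit. For example, with $\tau=\epsn$ and $k=2$ we have $\om^\om>\om^\om[1]=\om^2$ but $\mathrm{G}_2(\om^\om)=\mathrm{G}_2(\om^2)$. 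This is harmless, since in part 2 you re-derive strictness directly at $i=i_j+1$ from the successor step. Likewise, your remark in part 4 that $\tau_{n_0+1}[k-1]>\tau_{n_0+1}[0]$ presupposes $\tau_{n_0+1}\in\Lim$; the successor case is trivial with $h=0$.
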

 {\bf Proof.} Part 1 is shown by straightforward induction on $i\le\istarkal$, using Lemma \ref{masterseqmaxlem} for $i=0$.
 
 Part 2 is a consequence of part 1. 
 Indeed, let $i_1,\ldots,i_{\Ck(\al)}$ be the sequence of indices according to part 2 of Lemma \ref{Ckvaluelem}, so that for $j=1,\ldots,\Ck(\al)$ \[\Pkjtimes(\al)=\al[k-1]^{i_j+1}=\al[k-1]^{i_j}-1,\]
 and for all $j\le\Ck(\al)$ and $\be>\Pkjtimes(\al)$ we have $\Ck(\be)>\Ck(\Pkjtimes(\al))$.
 
 Part 3 follows from the definition of $\Ck$ and $\Pk$ via an induction on $j$, applying Lemma \ref{tricklem}.
 
 Part 4 was pointed out at the beginning of the proof of Lemma \ref{masterseqmaxlem}, and the general argumentation was given in (the proof of) Lemma \ref{Bachmannintervallem}.
\qed
 
\begin{cor}\label{maxexistencepkmaxprescor}
Let $k\ge2$. 
\begin{enumerate}
\item For any $m<\om$ there exists a maximal $\al<\tau$ such that $\Ck(\al)=m$. 
\item The predecessor operation $\Pk$ preserves maximality with respect to $\Ck$.
\end{enumerate}
\end{cor}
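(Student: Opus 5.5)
Both parts will come out of Lemma \ref{masterseqmaxlem} and Lemma \ref{Pkiterationmaxlem}, with Lemma \ref{strongmaxlem} supplying the strong form of maximality used in part 2.

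\emph{Part 1.} Fix $k\ge2$ and $m<\om$. By Lemma \ref{masterseqmaxlem} the sequence $(\Ck(\tau_n))_{n<\om}$ is strictly increasing in $\om$, so $\Ck(\tau_n)\ge n$. Choose $n:=m$, so that $\Ck(\tau_n)\ge m$, and set $\al:=\tau_n$ and $j:=\Ck(\al)-m\le\Ck(\al)$. By parts 2 and 3 of Lemma \ref{Pkiterationmaxlem}, $\Pkjtimes(\al)$ is maximal with respect to $\Ck$ and satisfies $\Ck(\Pkjtimes(\al))=\Ck(\al)-j=m$. So $\Pkjtimes(\tau_n)$ is the required maximal ordinal. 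Since $\Ck$ is constant on equivalence classes, this also shows that every class has a maximum, so the maximum is unique.

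\emph{Part 2.} Let $\ga<\tau$ be maximal with respect to $\Ck$. If $\ga=0$, then $\Pk(\ga)=0$, and $0$ is maximal: it is the only ordinal with $\Ck$-value $0$, because every $\al>0$ has $\Ck(\al)>0$ by part 2 of Lemma \ref{Ckvaluelem}. So assume $\ga>0$ and set $m:=\Ck(\ga)\ge1$.

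By part 1 (or its proof) there is a maximal $\de:=\Pkjtimes(\tau_n)$ with $\Ck(\de)=m$. Uniqueness of maxima within a class gives $\ga=\de$. Since $j<\Ck(\tau_n)$, we get $\Pk(\ga)=\operatorname{P}^{(j+1)}_k(\tau_n)$ with $j+1\le\Ck(\tau_n)$, and this is maximal by part 2 of Lemma \ref{Pkiterationmaxlem}.

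The main obstacle is this identification step. It needs maximal elements to be unique in each class, which is immediate since the maximum of a set is unique. One must also ensure that $j<\Ck(\tau_n)$, i.e.\ that $m\ge1$; this is why the case $\ga=0$ is treated separately.

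An alternative direct route would argue from Lemma \ref{strongmaxlem}: $\Pk(\ga)<\ga$ and $\Ck(\Pk\ga)=m-1$ by Lemma \ref{tricklem}. For $\be>\Pk(\ga)$ one would need $\Ck(\be)\ge m$, which is less transparent. I would therefore use the identification via the master sequence as the primary argument.
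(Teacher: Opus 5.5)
Your proof is correct and follows the paper's argument. In part 1 you descend via $\Pk$ from a master-sequence element $\tau_n$, using Lemmas \ref{masterseqmaxlem} and \ref{Pkiterationmaxlem}; in part 2 you identify any maximal element with some $\Pkjtimes(\tau_n)$ and apply part 2 of Lemma \ref{Pkiterationmaxlem} once more. You merely make explicit the uniqueness-of-maximum step and the $m\ge1$ bookkeeping that the paper leaves implicit, and your opening reference to Lemma \ref{strongmaxlem} is superfluous, since your main argument never uses it (nor does the paper).
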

{\bf Proof.}
Choosing $n$ such that $\Ck(\tau_n)>m$, which is possible due to Lemma \ref{masterseqmaxlem}, and descending from $\tau_n$ by iteration of the operation $\Pk$, 
according to Lemma \ref{Pkiterationmaxlem} we eventually reach $\al:=\Pkjtimes(\tau_n)$, which is maximal with respect to $\Ck$,  such that $\Ck(\Pkjtimes(\al))=m$. 
This shows part 1.

For part 2, Lemma \ref{Pkiterationmaxlem} yields that any $\al$ that is maximal with respect to $\Ck$ is of a form $\Pkjtimes(\tau_n)$, and that further applications of $\Pk$ maintain
maximality. 
\qed

We come to the central notion in this article, the notion of {\it maximality quotient} of a set of ordinal terms. Part 1 of Corollary \ref{maxexistencepkmaxprescor} gives rise to the definition 
of a quotient $\tauCkquok$ with respect to the equivalence relation induced by $\Ck$, i.e.\ $\al,\be<\tau$ are equivalent if and only if $\Ck(\al)=\Ck(\be)$, in which elements
can be identified with their maximal representative. 

\begin{defi}[Maximality quotients] \label{maximalityquotientdefi}For $k\ge2$ we define
\[\tauCkquok:=\{\max\left\{\al<\tau\mid\Ck(\al)=m\}\mid m<\om\right\}.\]
\end{defi}

\begin{cor}\label{maxquotclscor} For $k\ge2$ the maximality quotient $\tauCkquok$ is the closure of $\{\tau_n\mid n<\om\}$ under the operation $\Pk$. 
\end{cor}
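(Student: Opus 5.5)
We show the two inclusions separately. Let $C$ denote the closure of $\{\tau_n\mid n<\om\}$ under $\Pk$, i.e.\ the set of all $\Pkjtimes(\tau_n)$ with $n,j<\om$.

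For $C\subseteq\tauCkquok$, Lemma \ref{masterseqmaxlem} gives that every $\tau_n$ is maximal with respect to $\Ck$. By part 2 of Corollary \ref{maxexistencepkmaxprescor}, $\Pk$ preserves maximality, so induction on $j$ shows that every $\Pkjtimes(\tau_n)$ is maximal. An element that is maximal with respect to $\Ck$ is precisely $\max\{\ga<\tau\mid\Ck(\ga)=m\}$ for $m$ its $\Ck$-value, so $C\subseteq\tauCkquok$. For $j>\Ck(\tau_n)$ we have $\Pkjtimes(\tau_n)=0$, since $\Pk(0)=0$, and $0=\Pkjtimes(\tau_n)$ already occurs for $j=\Ck(\tau_n)$ by part 3 of Lemma \ref{Pkiterationmaxlem}. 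So only $j\le\Ck(\tau_n)$ needs to be considered, and parts 2 and 3 of Lemma \ref{Pkiterationmaxlem} give this inclusion directly as well.

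For $\tauCkquok\subseteq C$, fix $m<\om$ and let $\al$ be the maximal ordinal with $\Ck(\al)=m$; it exists by part 1 of Corollary \ref{maxexistencepkmaxprescor}. By Lemma \ref{masterseqmaxlem} the values $\Ck(\tau_n)$ are strictly increasing in $n$, so we can choose $n$ with $\Ck(\tau_n)\ge m$. Set $j:=\Ck(\tau_n)-m$ and $\ga:=\Pkjtimes(\tau_n)$. By part 3 of Lemma \ref{Pkiterationmaxlem}, $\Ck(\ga)=m$, and by part 2, $\ga$ is maximal with respect to $\Ck$. Since the maximum of the class $\{\be<\tau\mid\Ck(\be)=m\}$ is unique, $\ga=\al$, and therefore $\al\in C$.

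The only delicate point is this last identification. It needs $\Pkjtimes(\tau_n)$ to be the maximum of its whole $\Ck$-class, not merely a locally maximal element. Lemma \ref{Pkiterationmaxlem} part 2 supplies exactly this, since its proof gives $\Ck(\be)>\Ck(\Pkjtimes(\al))$ for all $\be>\Pkjtimes(\al)$. With that lemma available, I expect no further obstacle.
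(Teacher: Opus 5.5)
Your proof is correct and takes the same route as the paper. The paper's proof only cites Lemma \ref{masterseqmaxlem}, Lemma \ref{Pkiterationmaxlem} and Corollary \ref{maxexistencepkmaxprescor}, and you spell out how these yield the two inclusions: maximality of the $\tau_n$ plus preservation under $\Pk$ for one, and descent from a $\tau_n$ with $\Ck(\tau_n)\ge m$ plus uniqueness of the maximum for the other. One small imprecision is in your side remark: part 3 of Lemma \ref{Pkiterationmaxlem} only gives $\Ck(\Pkjtimes(\tau_n))=0$ for $j=\Ck(\tau_n)$, so you also need part 2 of Lemma \ref{Ckvaluelem} ($\Ck(\be)>0$ iff $\be>0$) to conclude $\Pkjtimes(\tau_n)=0$; the remark is not needed for the argument anyway.
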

{\bf Proof.} Immediate from Lemmas \ref{masterseqmaxlem}, \ref{Pkiterationmaxlem}, and Corollary \ref{maxexistencepkmaxprescor}.
\qed

\begin{lem}\label{maxquotmonotolem} The sequence of maximality quotients $(\tauCkquok)_{k\ge2}$ is $\subseteq$-increasing.
\end{lem}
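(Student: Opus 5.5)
We would prove that $\tauCkquok\subseteq\tauCkquoke$ for every $k\ge2$ by using the closure description in Corollary \ref{maxquotclscor}. The left side is the closure of $\{\tau_n\mid n<\om\}$ under $\Pk$, and the right side is the closure of the same set under $\Pke$. Every element of the first closure has the form $\Pkjtimes(\tau_n)$: iterated $\Pk$-descent from a larger $\tau_m$ passes through $\tau_n$ by part 4 of Lemma \ref{Pkiterationmaxlem}. So it suffices to show the following claim. For every $n$ and every $j\le\Ck(\tau_n)$ there is some $j'$ with $\Pkjtimes(\tau_n)=\operatorname{P}^{(j')}_{k+1}(\tau_n)$.

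The key combinatorial step compares the two descent sequences from $\al:=\tau_n$, namely $(\al[k-1]^i)_i$ and $(\al[k]^i)_i$. We would show that the first is a subsequence of the second. The induction hypothesis is that $\ga$ occurs in the $[k]$-descent from $\al$; we then show that $\ga[k-1]$ occurs there too.
\begin{enumerate}
\item If $\ga$ is a successor, then $\ga[k-1]=\ga[k]$ and the claim is immediate.
\item If $\ga$ is a limit, then $\ga[k-1]<\ga[k]<\ga$ by strict monotonicity of fundamental sequences. We apply Lemma \ref{Bachmannintervallem} with base $k+1\ge2$, $j:=k-1$, the ordinal $\ga[k]\in(\ga[k-1],\ga)$, and $\be:=\ga$. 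This gives $\ga[k-1]=\ga[k][k]^{h+1}=\ga[k]^{h+2}$ for some $h$ such that $\ga[k]^{h+1}$ is a successor.
\end{enumerate}
Hence the $[k]$-descent from $\ga$, and thus from $\al$, reaches $\ga[k-1]$. Moreover it reaches $\ga[k-1]$ immediately after a successor stage.

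Next we transfer this to the predecessor iterates using part 2 of Lemma \ref{Ckvaluelem}. By that lemma, $\Pkjtimes(\al)=\al[k-1]^{i_j+1}$ where $\al[k-1]^{i_j}$ is a successor. Both of these ordinals lie in the $[k]$-descent from $\al$ by the subsequence property. They are consecutive there too, since the $[k]$-step from a successor coincides with the $[k-1]$-step. So $\Pkjtimes(\al)$ is an element of the $[k]$-descent that is reached from a successor stage. By part 2 of Lemma \ref{Ckvaluelem} applied to $\Cke$ and $\Pke$, such elements are exactly the iterates $\operatorname{P}^{(j')}_{k+1}(\al)$. This proves the claim, and the lemma follows from Corollary \ref{maxquotclscor}.

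The main obstacle is the subsequence step in the limit case. One has to check that Lemma \ref{Bachmannintervallem} is legitimately applied with base $k+1$ to the interval $(\ga[k-1],\ga)$. Its hypothesis concerns an arbitrary fundamental-sequence index $j$, so $j=k-1$ is permitted. The strict inequality $\ga[k-1]<\ga[k]$ is what places $\ga[k]$ inside the open interval. One also needs the bookkeeping that the induction runs along the finite, strictly decreasing $[k-1]$-descent, so that the induction terminates.
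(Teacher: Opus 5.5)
Your proposal is correct and follows the paper's own proof. Both arguments reduce to Corollary \ref{maxquotclscor}, use Lemma \ref{Bachmannintervallem} to show that the $[k-1]$-descent from $\tau_n$ is a subsequence of the $[k]$-descent, and then use that $\Pk$ and $\Pke$ agree on successors, so that the $\Pk$-iterates of $\tau_n$ appear among its $\Pke$-iterates. You make explicit what the paper leaves implicit: the instance of Lemma \ref{Bachmannintervallem} with base $k+1$ and $j=k-1$ applied to $\ga[k]\in(\ga[k-1],\ga)$, and the bookkeeping through part 2 of Lemma \ref{Ckvaluelem} for both bases.
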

{\bf Proof.} We have $\{\tau_n\mid n<\om\}\subseteq\tauCkquok$ for every $k\ge2$ according to Lemma \ref{masterseqmaxlem}. 
By Corollary \ref{maxquotclscor} any $\al\in\tauCkquok$ is the result of iterated application of $\Pk$ to some $\tau_n$.
But, again as a consequence of Lemma \ref{Bachmannintervallem}, the sequence built by iterated applications of $\cdot[k-1]$ to $\tau_n$ is a subsequence of the sequence built by 
iterations of $\cdot[k]$ starting from $\tau_n$. Since $\Pk$ and $\Pke$ act identically on successors (by subtracting $1$), we infer that therefore also the sequence built by iterated 
applications of $\Pk$ to $\tau_n$ is a subsequence of the sequence built by iterations of $\Pke$ starting from $\tau_n$. 
Thus, we also have $\al\in\tauCkquoke$.
\qed

\begin{theo}\label{maxquotisomtheo}
The restriction $\Ckstar$ of $\Ck$ to $\tauCkquok$ \[\Ckstar:\quad\tauCkquok\to\N\]
is an order isomorphism, and the restriction of $\Pk$ to $\tauCkquok$ is the true predecessor function on $\tauCkquok$.
\end{theo}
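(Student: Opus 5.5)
The plan is to establish, in order, that $\Ckstar$ is a bijection, that it is order preserving in both directions, and finally that $\Pk$ acts as the predecessor on $\tauCkquok$.

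\emph{Bijectivity.} By Definition \ref{maximalityquotientdefi}, each element of $\tauCkquok$ has the form $\al_m:=\max\{\al<\tau\mid\Ck(\al)=m\}$ for some $m<\om$. These maxima exist by part 1 of Corollary \ref{maxexistencepkmaxprescor}. Since $\Ck(\al_m)=m$, the map $\Ckstar$ is surjective onto $\N$. It is injective because $\al_m$ is determined by $m$: if $\Ckstar(\al_m)=\Ckstar(\al_{m'})$, then $m=m'$ and hence $\al_m=\al_{m'}$.

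\emph{Order preservation.} Take $\al,\be\in\tauCkquok$ with $\al<\be$. Then $\al$ is maximal with respect to $\Ck$, so Lemma \ref{strongmaxlem} gives $\Ck(\be)>\Ck(\al)$. Hence $\Ckstar$ is strictly increasing. A strictly increasing bijection between linear orders is an order isomorphism, and the inverse is automatically monotone by trichotomy. As a result, $\tauCkquok$ has order type $\om$ and $\al_0<\al_1<\cdots$. If one prefers not to rely on Lemma \ref{strongmaxlem}, which the paper says is not needed downstream, there is a direct argument: if $\al<\be$ but $\Ck(\al)>\Ck(\be)$, then $\be+l$ with $l:=\Ck(\al)-\Ck(\be)$ satisfies $\Ck(\be+l)=\Ck(\al)$ and $\be+l>\al$, which contradicts the maximality of $\al$. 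Equality $\Ck(\al)=\Ck(\be)$ is impossible by injectivity.

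\emph{Predecessor.} By part 2 of Corollary \ref{maxexistencepkmaxprescor}, $\Pk$ maps $\tauCkquok$ into itself. For $\al=\al_m$ with $m>0$, Lemma \ref{tricklem} gives $\Ck(\Pk\al)=m-1$. Since $\Pk\al$ is maximal, it equals $\al_{m-1}$, which is the immediate predecessor of $\al_m$ in $\tauCkquok$ under the isomorphism just established. For $m=0$ we have $\al_0$, the least element. Here I would check that $\Pk\al_0=\al_0$, mirroring $\Pk\Ck\al=\Ck\al-1$ with truncated subtraction ($0-1=0$). By part 2 of Lemma \ref{Ckvaluelem}, $\Ck(\al)=0$ forces $\al=0$, so $\al_0=0$ and indeed $\Pk 0=0$.

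The main delicate point is the order-preservation step, which relies on the strong maximality of Lemma \ref{strongmaxlem} or on the successor-shift argument above; everything else is bookkeeping from the corollaries.
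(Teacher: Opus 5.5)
Your proof is correct. The bijectivity step and the predecessor step match the paper; the order-preservation step takes a different route.

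\textbf{The paper's route.} The paper gets monotonicity globally. By Lemma \ref{Pkiterationmaxlem}, the finite $\Pk$-descents from $\tau_0,\tau_1,\ldots$, listed in increasing order, end-extend one another. Corollary \ref{maxquotclscor} shows that these descents exhaust $\tauCkquok$. Lemma \ref{tricklem} then shows that $\Ck$ maps this enumeration increasingly onto $\N$. So order preservation and the predecessor property both come out of one explicit picture of $\tauCkquok$ as concatenated $\Pk$-descents from the master sequence. That picture is the closure characterization the Goodstein section relies on.

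\textbf{Your route.} You argue locally through strong maximality. Your successor-shift argument is the better of your two options. It uses only $\Ck(\be+l)=\Ck(\be)+l$ and $\be+l<\tau$. You should state the latter explicitly, since it needs $\tau$ to be a limit.

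The argument in fact proves Lemma \ref{strongmaxlem} outright in two lines. For $\be>\al$, the case $\Ck(\be)=\Ck(\al)$ is excluded by the maximality of $\al$ itself. So neither the maximality of $\be$ nor the Bachmann-based detour in the paper's proof of that lemma is needed.

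\textbf{What each buys.} Your approach isolates that order preservation is a formal consequence of the definition of the quotient. The Bachmann property enters only through the existence of maxima (Corollary \ref{maxexistencepkmaxprescor}, part 1) and through closure of maximality under $\Pk$ (part 2). The paper's approach yields the concrete increasing enumeration instead.

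Your treatment of the least element, $\al_0=0$ and $\Pk(0)=0$, is fine.
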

{\bf Proof.} $\Ckstar$ is injective by definition and surjective by Corollary \ref{maxexistencepkmaxprescor}.
As a consequence of Lemma \ref{Pkiterationmaxlem}, the sequences resulting from iterating $\Pk$ on the ordinals $\tau_n$ and writing them in increasing order results in 
end-extensions by finitely many elements in each step of incrementation of $n$. Corollary \ref{maxquotclscor} shows that this process
exhausts all of $\tauCkquok$. According to Lemma \ref{tricklem}, the application of $\Ck$ to the resulting increasing enumeration of  $\tauCkquok$ then enumerates $\N$ in
increasing order, cf.\ part 3 of Lemma \ref{Pkiterationmaxlem}.
\qed

\section{Goodstein process}\label{Goodsteinprocesssec}

We now define the abstract notion of base-$k$ representation of natural numbers and the uniform Goodstein process for $\mathcal{T}$ with respect to $(\tau,\cdot[\cdot])$. 
\begin{defi}\label{basechangedefi} Let $N\in\N$ and $k\ge2$. According to Theorem \ref{maxquotisomtheo} there exists a unique $\al\in\tauCkquok$ such that $N=\Ck(\al)$, 
the base-$k$ representation of $N$. 
For any $l\in(k,\om)$ the change of base from $k$ to $l$ for $N$ is defined by \[N[k\mapsto l]:=\Cl(\al),\]
so that $\al$ is the base-$l$ representation of $N[k\mapsto l]$ since $\tauCkquok\subseteq\tauCkquol$ by Lemma \ref{maxquotmonotolem}. 
The change of base $k$ to $\om$ for $N$ is defined by \[N[k\mapsto\om]:=\al.\]
\end{defi}

These preparations allow us to quite canonically generalize Goodstein's theorem \cite{Goodstein1944} and Cichon's independence proof in \cite{Cichon1983} to obtain independence
of the theory $\mathcal{T}$. 
The argumentation is as follows: Given a starting base $k\in[2,\om)$ and a natural number $N\in\N$, 
by Theorem \ref{maxquotisomtheo} we obtain a {\it unique} $\al\in\tauCkquok$ such that \[N=\Ck(\al)=:N_1.\] 
The uniqueness of $\al$ such that $\Ck(\al)=N$ in $\tauCkquok$ allows us to understand $\al$ as {\it the base-$k$ representation of $N$}, 
and gives rise to the base transformation $N[k\mapsto l]:=\Cl(\al)$, so that the Goodstein operation of incrementing the base of representation and subsequent
subtraction of $1$ can be defined by \[N_1[k\mapsto k+1]-1=\Cke(\al)-1=\Pke\Cke\al=\Cke\Pke\al=:N_2,\] 
where we used Lemma \ref{tricklem}. Note that $\Pke\al\in\tauCkquoke$ by Corollary \ref{maxquotclscor} and Lemma \ref{maxquotmonotolem}. 
Iterating the procedure yields \[N_2[k+1\mapsto k+2]-1=\Ckz(\Pke\al)-1=\Pkz\Ckz(\Pke\al)=\Ckz\Pkz(\Pke\al)=:N_3,\]
so that we obtain the generalized Goodstein sequence \[N_{l+1}=\Ckl(\Pkl\ldots\Pke\al)\]
where $\Pkl\ldots\Pke\al$ is the unique preimage of $N_{l+1}$ under $\Cklstar$.
Termination of the generalized Goodstein sequence is therefore expressed by \[\exists l\:N_{l+1}=0,\]
and noting that $\Ck\al=0$ if and only if $\al=0$, we may reformulate the generalized Goodstein principle as follows:
\begin{equation}\label{reformulation}\forall k\in[2,\om)\:\forall\al\in\tauCkquok\:\exists l\:\:\Pkl\ldots\Pke\al=0.\end{equation}
Thus, transfinite induction up to $\tau$ proves this generalized Goodstein principle, since $\Pj\be<\be$ for any $j>0$ and any $\be\in(0,\tau)$.
Independence then follows, once we show that the function $\hk:\tau\to\N$ defined by 
\begin{equation}\label{hkdefieq}\hk(\al):=k + \min\{l\mid\Pkl\ldots\Pke\al=0\}\end{equation}
can easily be expressed in terms of the Hardy function $H_\al$, as in \cite{Cichon1983} for the original Goodstein principle. Defining the Hardy hierarchy along the initial segment $\tau$ 
in the same way as in \cite{Cichon1983} by 
\begin{equation}\label{Hardydefi} H_0(x):=x,\:H_{\al+1}(x):=H_\al(x+1), \mbox{ and }H_\la(x):=H_{\la[x]}(x),\end{equation}
we obtain by straightforward $<$-induction on $\al$, simultaneously in $k$:
\begin{equation}\label{hkHardyeq} \hk(\al)=H_\al(k),\end{equation}
noting that for $\al\in\Lim$ we have $\Pke(\al)=\Pke(\al[k])$, so that the i.h.\ for $\al[k]$ yields \[\hk(\al)=\hk(\al[k])=H_{\al[k]}(k)=H_\al(k),\]
and that in the successor case $\al=\be+1$, the situation $\be=0$ is trivial, while for $\be>0$, since $\Pke(\be+1)=\be$, we have 
\[\hk(\be+1)=\hke(\be)=k+1+\min\{l\mid \Pkel\ldots\Pkz\be=0\}=H_\be(k+1)=H_{\be+1}(k),\]
where we used the i.h.\ for $\be$ at $k+1$.

Since the function defined in (\ref{hkdefieq}) is expressed in terms of the Hardy hierarchy as in (\ref{hkHardyeq}), if 
the generalized Goodstein principle were provable in $\mathcal{T}$, we would obtain a proof in $\mathcal{T}$ of the totality of $H_\tau$ as defined by diagonalization along 
the master sequence: \[H_\tau(k):=H_{\tau_k}(k),\] 
using that $(\tau_n)_{n<\om}$ is completely contained in each $\tauCkquok$.
Independence of the resulting abstract Goodstein principle thus
follows from the unprovability of totality of the Hardy function $H_\tau$ in $\mathcal{T}$, which we assumed to be derivable from a proper ordinal analysis of $\mathcal{T}$ 
in the usual way. We thus obtain

\begin{cor} The Goodstein process defined according to base transformation as in Definition \ref{basechangedefi} always terminates; this statement, however, exceeds
the proof-strength of the theory $\mathcal{T}$. 
\end{cor}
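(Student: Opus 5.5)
The plan has two halves: termination inside a theory with transfinite induction up to $\tau$, and unprovability in $\mathcal{T}$ by reduction to the totality of $H_\tau$.

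\emph{Termination.} First I will put the process in the ordinal form (\ref{reformulation}). Given $k\ge2$ and $N$, Theorem \ref{maxquotisomtheo} supplies the unique $\al\in\tauCkquok$ with $\Ck(\al)=N$. The $(l+1)$-st member is $N_{l+1}=\Ckl(\Pkl\ldots\Pke\al)$, which uses Lemma \ref{tricklem} together with $\Pke\al\in\tauCkquoke$; the latter comes from Corollary \ref{maxexistencepkmaxprescor} and Lemma \ref{maxquotmonotolem}, and an induction on $l$ propagates it along the sequence. Since $\Ckl(\ga)=0$ iff $\ga=0$, the process terminates exactly when $\Pkl\ldots\Pke\al=0$ for some $l$. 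Each $\Pj$ with $j>0$ strictly decreases every nonzero $\be<\tau$: on successors this is immediate, and on limits it follows from $\Pj(\la)=\Pj(\la[j-1])$ with $\la[j-1]<\la$ by induction. Hence the sequence $\Pke\al,\Pkz\Pke\al,\ldots$ is strictly decreasing until it reaches $0$, and well-foundedness of $\tau$ gives termination.

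\emph{Independence.} I will show that provability of the principle in $\mathcal{T}$ yields provable totality of $H_\tau$. The first step is to establish (\ref{hkHardyeq}), $\hk(\al)=H_\al(k)$, by $<$-induction on $\al$ simultaneously in $k$. The limit case uses $\Pke(\la)=\Pke(\la[k])$ and matches $H_\la(k)=H_{\la[k]}(k)$. The successor case uses $\Pke(\be+1)=\be$ together with the induction hypothesis at $k+1$. Next, since $\tau_k\in\tauCkquok$ by Lemma \ref{masterseqmaxlem}, applying the principle to base $k$ and the number $\Ck(\tau_k)$ shows that $\hk(\tau_k)$ is defined. So $H_\tau(k)=H_{\tau_k}(k)=\hk(\tau_k)$ is defined for every $k\ge2$, and the cases $k<2$ are harmless.

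\emph{Main obstacle.} The delicate point is the formalization. The identity $\hk(\al)=H_\al(k)$ must be available inside $\mathcal{T}$ in a form that makes ``$H_\tau$ is total'' follow from the principle with no transfinite induction beyond what $\mathcal{T}$ proves. I would handle this by noting that the identity is a statement about finite computations. For fixed $k$ and $\al=\tau_k$, both sides are evaluated by the same primitive recursive unfolding of fundamental sequences: the Hardy computation from $(\al,k)$ steps through the pairs $(\Pkl\ldots\Pke\al,\,k+l)$, modulo limit rewrites. Hence ``the Goodstein sequence from $\Ck(\tau_k)$ in base $k$ reaches $0$ at stage $l$'' is provably equivalent in elementary arithmetic to ``the Hardy computation of $H_{\tau_k}(k)$ halts with value $k+l$''. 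This turns the principle into $\forall k\,\exists y\,H_\tau(k)=y$. By the assumed ordinal analysis of $\mathcal{T}$ this $\Pi^0_2$ statement is unprovable in $\mathcal{T}$, and therefore so is the generalized Goodstein principle.
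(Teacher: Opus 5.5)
Your proposal is correct and follows the paper's own route: reduce to the ordinal form (\ref{reformulation}), get termination from $\Pj\be<\be$ and the well-foundedness of $\tau$, prove $\hk(\al)=H_\al(k)$ by induction on $\al$ simultaneously in $k$, and instantiate at $\tau_k\in\tauCkquok$ to obtain totality of $H_\tau$. Your formalization paragraph goes beyond what the paper spells out, but only the step-by-step match between the $\Pk$-descent and the Hardy unfolding is genuinely elementary; identifying the numerical Goodstein sequence from $\Ck(\tau_k)$ with that descent uses the maximality lemmas and the termination of the fundamental-sequence descents that define $\Ck$, so you should claim that equivalence as provable in $\mathcal{T}$ rather than in elementary arithmetic (provability in $\mathcal{T}$ is all the reduction needs).
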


\section{Conclusion and outlook}

The concept of quotients allows for a most
direct approach that establishes an immediate connection between complexity of ordinal notations and fast growing number theoretic functions. 
Moreover, we obtain an abstract notion of base-$k$ representation of natural numbers consisting of single ordinal numbers, which yields a meaningful direct correspondence
between ordinal and natural numbers via the mappings $\Ckstar$, which, depending on the underlying ordinal $\tau$, collapse the essence of ordinal notation systems into
the natural numbers. We consider this to be an insight that contributes to the quest for {\it natural well-orderings}, the interplay of large cardinal numbers, computable ordinals, and the
natural numbers. 


\section*{Acknowledgement}
I would like to thank Professor Andreas Weiermann for numerous insightful and motivating discussions on ordinal notation systems and Goodstein principles,
and for the suggestion to verify the characterization in terms of maximality in an earlier, concrete formulation of quotients based on restricted iteration of ordinal operations
($\imc$-quotients as mentioned in the introduction).


{\small
\begin{thebibliography}{99}
\bibitem{AFDWW20} T.\ Arai, D.\ Fern\'andez-Duque, S.\ S.\ Wainer, A.\ Weiermann:
    \textit{Predicatively unprovable termination of the Ackermannian Goodstein process.}
    Proceedings of the American Mathematical Society 148 (2020) 3567--3582. 
\bibitem{AWW21} T.\ Arai, S.\ S.\ Wainer, A.\ Weiermann:   
    \textit{Goodstein sequences based on a parametrized Ackermann–P\'eter function.} 
    The Bulletin of Symbolic Logic 27 (2021) 168--186.
\bibitem{BCW94} W.\ Buchholz, A.\ Cichon, A.\ Weiermann:
     \textit{A Uniform Approach to Fundamental Sequences and Hierarchies.}
     Mathematical Logic Quarterly 40 (1994) 273--286.
\bibitem{Cichon1983} E.\ A.\ Cichon:
    \textit{A short proof of two recently discovered independence results using recursion theoretic methods.}
    Proceedings of the American Mathematical Society 87 (1983) 704--706.
\bibitem{FDW24b} D.\ Fern\'andez-Duque and A.\ Weiermann:
    \textit{A walk with Goodstein.}
    The Bulletin of Symbolic Logic 30 (2024) 1--19.
\bibitem{FDW25} D.\ Fern\'andez-Duque and A.\ Weiermann:
    \textit{Fast Goodstein walks.}
    Bulletin of the London Mathematical Society 57 (2025) 510--533.
\bibitem{Gentzen1936} G. Gentzen:
    \textit{Die Widerspruchsfreiheit der reinen Zahlentheorie.}
    Mathematische Annalen 112 (1936) 493--565.
\bibitem{Gentzen1938} G. Gentzen:
    \textit{Neue Fassung des Widerspruchsfreiheitsbeweises f\"ur die reine Zahlentheorie.}
    Forschung in Logik und Grundlegung der exakten Wissenschaften.
    Heft 4, 19--44. Hirzel, Leipzig 1938.
\bibitem{Goodstein1944} R.\ L.\ Goodstein:
    \textit{On the restricted ordinal theorem.}
    The Journal of Symbolic Logic 9 (1944) 33--41.
\bibitem{Kreisel1952} G.\ Kreisel: 
    \textit{On the interpretation of non-finitist proofs II.} 
    The Journal of Symbolic Logic 17 (1952) 43--58.
\bibitem{We17} A.\ Weiermann:
    \textit{Ackermannian Goodstein principles for first order Peano arithmetic.}
    S.-D.\ Friedman, D.\ Raghavan, Y.\ Yang (eds.): {\it Sets and Computations}, Lecture Notes Series Vol.\ 33, 
    Institute for Mathematical Sciences, National University of Singapore, World Scientific Publishing Company (2017) 157--181. 
\bibitem{W26} G.\ Wilken:
   \textit{Fundamental sequences based on localization.}
    Annals of Pure and Applied Logic 177 (2026) 1--43.
\bibitem{W} G.\ Wilken:
   \textit{Generalizing Goodstein's theorem and Cichon's independence proof.}
   Available at: https://arxiv.org/abs/2511.04526.
\end{thebibliography}}
\end{document}